\definecolor{verylight}{gray}{0.97}
\definecolor{light}{gray}{0.9}
\definecolor{medium}{gray}{0.85}
\definecolor{dark}{gray}{0.6}
\def\frk{\frak}               
\def\pp{{\frk p}}
\def\Phi{{\frk n}}
\def\Phi{{\frk N}}
\def\MR{{\mathcal R}}
\def\MS{{\mathcal S}}
\def\MC{{\mathcal C}}
\def\opn#1#2{\def#1{\operatorname{#2}}} 
\opn\chara{char} \opn\length{\ell} \opn\pd{pd} \opn\rk{rk}
\opn\projdim{proj\,dim} \opn\injdim{inj\,dim} \opn\rank{rank}
\opn\depth{depth} \opn\grade{grade} \opn\height{height}
\opn\embdim{emb\,dim} \opn\codim{codim}
\opn\Tr{Tr} \opn\bigrank{big\,rank}
\opn\superheight{superheight}\opn\lcm{lcm}
\opn\trdeg{tr\,deg}
\opn\reg{reg} \opn\lreg{lreg} \opn\ini{in} \opn\lpd{lpd}
\opn\size{size}\opn\bigsize{bigsize}
\opn\cosize{cosize}\opn\bigcosize{bigcosize}
\opn\sdepth{sdepth}\opn\sreg{sreg}
\opn\link{link}\opn\fdepth{fdepth}\opn\cdeg{cdeg}
\opn\div{div} \opn\Div{Div} \opn\cl{cl} \opn\Cl{Cl}
\opn\Spec{Spec} \opn\Supp{Supp} \opn\supp{supp} \opn\Sing{Sing}
\opn\Ass{Ass} \opn\Min{Min}\opn\Mon{Mon} \opn\dstab{dstab} \opn\astab{astab}
\opn\Syz{Syz}
\opn\Ann{Ann} \opn\Rad{Rad} \opn\Soc{Soc}
\opn\Im{Im} \opn\Ker{Ker} \opn\Coker{Coker} \opn\Am{Am}
\opn\Hom{Hom} \opn\Tor{Tor} \opn\Ext{Ext} \opn\End{End}
\opn\Aut{Aut} \opn\id{id}
\opn\nat{nat}
\opn\pff{pf}
\opn\Pf{Pf} \opn\GL{GL} \opn\SL{SL} \opn\mod{mod} \opn\ord{ord}
\opn\Gin{Gin} \opn\Hilb{Hilb}\opn\sort{sort}
\opn\aff{aff} \opn\con{conv} \opn\relint{relint} \opn\st{st}
\opn\lk{lk} \opn\cn{cn} \opn\core{core} \opn\vol{vol}
\opn\link{link} \opn\star{star}\opn\lex{lex}\opn\cdeg{cdeg}
\opn\gr{gr}
\def\pot#1#2{#1[\kern-0.28ex[#2]\kern-0.28ex]}
\opn\dirlim{\underrightarrow{\lim}}
\opn\inivlim{\underleftarrow{\lim}}
\let\to=\rightarrow
\def\Implies{\ifmmode\Longrightarrow \else
        \unskip${}\Longrightarrow{}$\ignorespaces\fi}
\def\implies{\ifmmode\Rightarrow \else
        \unskip${}\Rightarrow{}$\ignorespaces\fi}
\def\iff{\ifmmode\Longleftrightarrow \else
        \unskip${}\Longleftrightarrow{}$\ignorespaces\fi}
\newtheorem{Theorem}{Theorem}[section]
 \newtheorem{Lemma}[Theorem]{Lemma}
 \newtheorem{Corollary}[Theorem]{Corollary}
 \newtheorem{Proposition}[Theorem]{Proposition}
 \newtheorem{Remark}[Theorem]{Remark}
\let\epsilon\varepsilon
\let\kappa=\varkappa
\def\qed{\ifhmode\textqed\fi
      \ifmmode\ifinner\quad\qedsymbol\else\dispqed\fi\fi}
\def\textqed{\unskip\nobreak\penalty50
       \hskip2em\hbox{}\nobreak\hfil\qedsymbol
       \parfillskip=0pt \finalhyphendemerits=0}
\def\dispqed{\rlap{\qquad\qedsymbol}}
\opn\dis{dis}
\def\pnt{{\raise0.5mm\hbox{\large\bf.}}}
\opn\Lex{Lex}
\begin{document}

 \title {On the symbolic powers of binomial edge ideals}

 \author {Viviana Ene, J\"urgen Herzog}

\address{Viviana Ene, Faculty of Mathematics and Computer Science, Ovidius University, Bd.\ Mamaia 124,
 900527 Constanta, Romania}  \email{vivian@univ-ovidius.ro}

\address{J\"urgen Herzog, Fachbereich Mathematik, Universit\"at Duisburg-Essen, Campus Essen, 45117
Essen, Germany} \email{juergen.herzog@uni-essen.de}

 \begin{abstract} We show that under some conditions, if the initial ideal $\ini_<(I)$ of an ideal $I$ in a polynomial ring has the property that its symbolic and ordinary powers coincide, then the ideal $I$ shares the same property. We apply this result to prove the equality between symbolic and ordinary powers for binomial edge ideals with quadratic Gr\"obner basis.
 \end{abstract}

\subjclass[2010]{05E40,13C15}
\keywords{symbolic power, binomial edge ideal, chordal graphs}

 \maketitle

\section{Introduction}

Binomial edge ideals were introduced in \cite{HHHKR} and, independently, in \cite{Oh}. Let $S=K[x_1,\ldots,x_n,y_1,\ldots,y_n]$ be the polynomial ring in $2n$ variables over a field $K$ and $G$ a simple graph  on the vertex set $[n]$ with edge set $E(G).$ The binomial edge ideal of $G$ is generated by the set of  $2$-minors of the generic matrix
$X=\left(
\begin{array}{cccc}
x_1 & x_2 & \cdots & x_n\\
y_1 & y_2 & \cdots & y_n
\end{array}\right)
$ indexed by the edges of $G$. In other words,
\[
J_G=(x_iy_j-x_jy_i: i<j \text{ and }\{i,j\}\in E(G)).
\] We will often use the notation $[i,j]$ for the maximal minor $x_iy_j-x_jy_i$ of $X.$

In the last decade, several properties of binomial edge ideals have been studied. In \cite{HHHKR}, it was shown that, for every graph $G,$ the ideal $J_G$ is a radical ideal and the minimal prime ideals are characterized in terms of the combinatorics of the graph. Several articles considered the Cohen-Macaulay property of binomial edge ideals; see, for example, \cite{BMS, EHH, RR, Ri, Ri2}. A significant effort has been done for studying the resolution of binomial edge ideals. For relevant results on this topic we refer to the recent survey 
\cite{Sara} and the references therein.

In this paper, we consider symbolic powers of binomial edge ideals. The study and use of symbolic powers have been a reach topic of research in commutative algebra for more than 40 years. Symbolic powers and ordinary powers do not coincide in general. However, there are classes of homogeneous ideals in polynomial rings for which the symbolic and ordinary powers coincide. For example, if $I$ is the edge ideal of a graph, then $I^k=I^{(k)}$ for all $k\geq 1$  if and only if the graph is bipartite. More general, the facet ideal $I(\Delta)$ of a simplicial complex $\Delta$ has the property that $I(\Delta)^k=I(\Delta)^{(k)}$ for all $k\geq 1$ (equivalently, $I(\Delta)$ is normally torsion free) if and only if $\Delta$ is a Mengerian complex; see \cite[Section 10.3.4]{HH10}. The ideal of the maximal minors of a generic matrix shares the same property, that is, the symbolic and ordinary powers coincide \cite{DEP}.

To the best of our knowledge, the comparison between symbolic and ordinary powers for binomial edge ideals was considered so far only  in \cite{Oh2}. In Section 4 of this paper, Ohtani proved that if $G$ is a complete multipartite graph, then $J_G^k=J_G^{(k)}$ for all integers
$k\geq 1.$ 

In our paper we prove that, for any binomial edge ideal with quadratic Gr\"obner basis,  the symbolic and ordinary powers of $J_G$ coincide. The proof is based on the  transfer of the equality for symbolic and ordinary powers from the initial ideal to the ideal itself. 

The structure of the paper is the following. In Section~\ref{one} we survey basic results needed in the next section on symbolic powers of ideals in Noetherian rings and on binomial edge ideals and their primary decomposition. 

In Section~\ref{three} we discuss symbolic powers in connection to initial ideals. Under some specific conditions on the homogeneous ideal $I$ in a polynomial ring over a field, one may derive that if $\ini_<(I)^k=\ini_<(I)^{(k)}$ for some integer $k\geq 1,$ then $I^k=I^{(k)}$; see Lemma~\ref{inilemma}. By using this lemma and the properties of binomial edge ideals, we show in Theorem~\ref{iniconseq} that if 
$\ini_<(J_G)$ is a normally torsion-free ideal, then the symbolic and ordinary powers of $J_G$ coincide. This is the case, for example, if 
$G$ is a closed graph (Corollary~\ref{closed}) or the cycle $C_4.$ However, in general, $\ini_<(J_G)$  is not a normally torsion-free ideal.  For example, for the binomial edge ideal   of the $5$--cycle,   we have $J_{C_5}^2=J_{C_5}^{(2)}$, but $(\ini_<(J_{C_5}))^2\subsetneq (\ini_<(J_{C_5}))^{(2)}.$

\section{Preliminaries}
\label{one}

In this section we summarize basic facts about symbolic powers of ideals and binomial edge ideals. 

\subsection{Symbolic powers of ideals}

Let $I\subset R$ be an ideal in a Noetherian ring $R,$ and let $\Min(I)$ the set of the minimal prime ideals of $I.$ For an iteger $k\geq 1,$ one defines the \emph{$k^{th}$ symbolic power} of $I$ as follows: 
\[
I^{(k)}=\bigcap_{\pp\in\Min(I)}(I^kR_\pp\cap R)=\bigcap_{\pp\in\Min(I)}\ker(R\to (R/I^k)_\pp)=\]
\[=\{a\in R: \text{ for every }\pp\in \Min(I), \text{ there exists }w_\pp\not\in \pp \text{ with }w_\pp a\in I^k\}=
\]
\[=\{a\in R: \text{ there exists }w\not\in \bigcup_{\pp\in \Min(I)}\pp\text{ with } wa\in I^k\}.
\]

By the definition of the symbolic power, we have $I^k\subseteq I^{(k)}$ for $k\geq 1. $ Symbolic powers do not, in general, coincide with the ordinary powers. However, if $I$ is a complete intersection or it is the determinantal ideal generated by the maximal minors 
of a generic matrix, then it is known that $I^k= I^{(k)}$ for $k\geq 1;$ see \cite{DEP} or \cite[Corollary 2.3]{BC03}.

Let $I=Q_1\cap \cdots \cap Q_m$ an irredundant primary decomposition of $I$  with $\sqrt{Q_i}=\pp_i$ for all $i.$ If the minimal prime ideals of $I$ are $\pp_1,\ldots \pp_s,$ then 
\[I^{(k)}=Q_1^{(k)}\cap\cdots \cap Q_s^{(k)}. \]

In particular, if $I\subset R=K[x_1,\ldots,x_n]$ is a square-free monomial ideal in a polynomial ring over a field $K$, then 
\[I^{(k)}=\bigcap_{\pp\in \Min(I)} \pp^k.
\]
Moreover, $I$ is normally torsion-free (i.e. $\Ass(I^m)\subseteq \Ass(I)$ for $m\geq 1$) if and only if $I^k= I^{(k)}$ for all $k\geq 1,$ if and only if $I$ is the Stanley-Reisner ideal of a Mengerian simplicial complex; see \cite[Theorem 1.4.6, Corollary 10.3.15]{HH10}. In particular, if $G$ is a bipartite graph, then its monomial edge ideal $I(G)$ is normally torsion-free \cite[Corollary 10.3.17]{HH10}.

In what follows, we will often use the binomial expansion of symbolic powers \cite{HNTT}. Let $I\subset R$ and $J\subset R^\prime$ be two homogeneous ideals in the polynomial algebras $R,R^\prime$ in disjoint sets of variables over the same field $K$. We write $I,J$ for the extensions of these two ideals in $R\otimes_K R^\prime.$ Then, the following binomial expansion holds.

\begin{Theorem}\cite[Theorem 3.4]{HNTT} In the above settings,  \[ (I+J)^{(n)}=\sum_{i+j=n}I^{(i)}J^{(j)}.\]
\end{Theorem}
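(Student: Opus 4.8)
The plan is to work in the polynomial ring $T=R\otimes_K R'$, with its natural bigrading for which $I$, $J$, $(I+J)^{(n)}$ and the right--hand side $L_n:=\sum_{i+j=n}I^{(i)}J^{(j)}$ are all bihomogeneous, and to prove the two inclusions separately. The key preliminary observation I would establish is a description of $\Min(I+J)$. Since $R'$ is free over $K$, both $R\to T$ and $R'\to T$ are faithfully flat, and for any prime $\pp\subset R$ the ring $T/\pp T=(R/\pp)\otimes_K R'$ is a polynomial ring over the domain $R/\pp$, hence a domain; thus $\pp T$ is prime. Writing $V(I+J)=V(IT)\cap V(JT)=\bigcup_{\pp\in\Min(I),\,\qq\in\Min(J)}V(\pp T+\qq T)$ and using that $(R/\pp)\otimes_K(R'/\qq)$ is free over $R/\pp$ and over $R'/\qq$, I would conclude that every $\Pp\in\Min(I+J)$ contracts to a minimal prime $\pp=\Pp\cap R\in\Min(I)$ and a minimal prime $\qq=\Pp\cap R'\in\Min(J)$.

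Granting this, the inclusion $L_n\subseteq(I+J)^{(n)}$ is immediate: it is enough to treat one summand $I^{(i)}J^{(j)}$ with $i+j=n$. For $a\in I^{(i)}$ and $b\in J^{(j)}$ choose, as in the description of symbolic powers recalled above, an element $u\in R$ outside every minimal prime of $I$ with $ua\in I^i$ and an element $v\in R'$ outside every minimal prime of $J$ with $vb\in J^j$. Then $(uv)(ab)\in I^iJ^j\subseteq(I+J)^n$, while $uv$ avoids every $\Pp\in\Min(I+J)$ because $u\notin\Pp\cap R=\pp$ and $v\notin\Pp\cap R'=\qq$; hence $ab\in(I+J)^{(n)}$.

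For the reverse inclusion I would localize at the minimal primes of $I+J$. Fix $\Pp\in\Min(I+J)$ with associated $\pp,\qq$ as above. A symbolic power becomes an ordinary power after localizing at a minimal prime, so $I^{(i)}R_\pp=I^iR_\pp$; as $R\setminus\pp$ maps into $T\setminus\Pp$, the induced map $R_\pp\to T_\Pp$ yields $I^{(i)}T_\Pp=I^iT_\Pp$, and symmetrically $J^{(j)}T_\Pp=J^jT_\Pp$. Consequently
\[
(I+J)^nT_\Pp=\sum_{i+j=n}I^iJ^jT_\Pp=\sum_{i+j=n}I^{(i)}J^{(j)}T_\Pp=L_nT_\Pp .
\]
Since $(I+J)^{(n)}=\bigcap_{\Pp\in\Min(I+J)}\big((I+J)^nT_\Pp\cap T\big)$, this shows that $(I+J)^{(n)}$ equals $\bigcap_{\Pp\in\Min(I+J)}(L_nT_\Pp\cap T)$, i.e.\ the intersection of the isolated primary components of $L_n$ (note that $L_n\supseteq(I+J)^n$ together with $L_n\subseteq(I+J)^{(n)}$ forces $\Min(L_n)=\Min(I+J)$).

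The main obstacle is to upgrade this to $(I+J)^{(n)}=L_n$, which amounts to showing that $L_n$ has no embedded primes, i.e.\ $\Ass(T/L_n)=\Min(I+J)$. Here I would exploit the tensor structure: by flat base change along $R\to T$ one gets $\Ass(T/I^{(i)}T)=\{\pp T:\pp\in\Min(I)\}$ (again each $\pp T$ is prime), symmetrically for $J$, and for ideals in disjoint sets of variables one has the identity $I^{(i)}J^{(j)}=I^{(i)}T\cap J^{(j)}T$, which pins down the associated primes of each single product. The genuinely delicate step, where I expect the real work to lie, is passing from the individual products to their sum $L_n$: I would attempt an induction on $n$ through short exact sequences relating $L_n$, $L_{n-1}$ and the extreme terms $I^{(n)}$ and $J^{(n)}$, arranged so that at each stage every associated prime is forced to be of the form $\pp T$ or $\qq T$ and hence to lie in $\Min(I+J)$.
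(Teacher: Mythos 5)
This theorem is quoted in the paper from \cite[Theorem 3.4]{HNTT} without proof, so there is no in-paper argument to compare against; your proposal has to stand on its own, and as written it does not.

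What you prove completely is the easy inclusion $\sum_{i+j=n}I^{(i)}J^{(j)}\subseteq (I+J)^{(n)}$: the description of $\Min(I+J)$ via contraction to $\Min(I)$ and $\Min(J)$ is correct (going-down for the flat extensions $R\to T$ and $R'\to T$ does what you want), and the multiplier argument with $uv$ is fine. The localization step is also correct and shows that $(I+J)^{(n)}$ and $L_n:=\sum_{i+j=n}I^{(i)}J^{(j)}$ have the same image in $T_\Pp$ for every $\Pp\in\Min(I+J)$, hence that $(I+J)^{(n)}$ is the intersection of the minimal primary components of $L_n$. But this is exactly where the content of the theorem begins, not where it ends: to conclude $L_n=(I+J)^{(n)}$ you must show $L_n=\bigcap_{\Pp\in\Min(I+J)}(L_nT_\Pp\cap T)$, i.e.\ that $\Ass(T/L_n)\subseteq\Min(I+J)$. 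You explicitly defer this (``I would attempt an induction on $n$ through short exact sequences''), and no such argument is supplied. The deferral is not a routine verification one can wave at: associated primes are not well behaved under sums of ideals, the individual identities $I^{(i)}J^{(j)}=I^{(i)}T\cap J^{(j)}T$ do not by themselves control $\Ass$ of the sum, and the exact sequences you allude to are not specified, so there is no way to check that each stage ``forces'' the associated primes to be minimal. In \cite{HNTT} this unmixedness of $\sum_{i+j=n}I^{(i)}J^{(j)}$ is precisely the technical heart of the proof and requires its own preparatory lemmas. So the proposal establishes one inclusion and a correct reduction, but leaves the essential half of the theorem unproved.
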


Moreover, we have the following criterion for the equality of the symbolic and ordinary powers.

\begin{Corollary}\cite[Corollary 3.5]{HNTT} \label{corh} In the above settings, assume that $I^t\neq I^{t+1}$ and $J^t\neq J^{t+1}$ for $t\leq n-1.$ Then 
$(I+J)^{(n)}=(I+J)^n$ if and only if $I^{(t)}=I^t$ and $J^{(t)}=J^t$ for every $t\leq n.$
\end{Corollary}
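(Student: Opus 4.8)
The plan is to derive both implications from the binomial expansion of symbolic powers stated above, namely $(I+J)^{(n)}=\sum_{i+j=n}I^{(i)}J^{(j)}$, combined with one elementary structural fact. Writing $T:=R\otimes_K R'$, for homogeneous ideals $A\subseteq R$ and $B\subseteq R'$ in disjoint variables the product ideal in $T$ is just the $K$-tensor product of vector spaces, $AB=A\otimes_K B$; this applies to $A=I^i,B=J^j$ and to $A=I^{(i)},B=J^{(j)}$ alike. Granting this, the ``if'' direction is immediate: assuming $I^{(t)}=I^t$ and $J^{(t)}=J^t$ for all $t\le n$, the binomial expansion gives $(I+J)^{(n)}=\sum_{i+j=n}I^{(i)}J^{(j)}=\sum_{i+j=n}I^iJ^j=(I+J)^n$, where the last equality is the ordinary binomial expansion of the product ideal. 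Note this direction never uses the strictness hypotheses $I^t\neq I^{t+1}$ and $J^t\neq J^{t+1}$.

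For the ``only if'' direction I would argue by contradiction. Assume $(I+J)^{(n)}=(I+J)^n$ but $I^{(t)}\neq I^t$ for some $t$ with $1\le t\le n$ (the statement for $J$ is symmetric, and $t=0$ is trivial). Since all the powers involved are homogeneous, choose a homogeneous $a\in I^{(t)}\setminus I^t$ of degree $d$. This is exactly where the strictness hypothesis is used: because $n-t\le n-1$, we have $J^{n-t}\neq J^{n-t+1}$, so I may choose a homogeneous $b\in J^{n-t}\setminus J^{n-t+1}$ of degree $e$. Then $a\otimes b\in I^{(t)}J^{(n-t)}\subseteq(I+J)^{(n)}$ via the $(i,j)=(t,n-t)$ term of the binomial expansion, so it suffices to show $a\otimes b\notin(I+J)^n$, which would contradict the assumed equality.

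To test membership in $(I+J)^n=\sum_{i+j=n}I^iJ^j$ I would give $T$ the bigrading in which the $x$-variables have bidegree $(1,0)$ and the $y$-variables bidegree $(0,1)$; then every ideal above is bigraded and $a\otimes b$ is bihomogeneous of bidegree $(d,e)$. Using $I^iJ^j=I^i\otimes_K J^j$, the $(d,e)$-component of $(I+J)^n$ is $\sum_{i+j=n}(I^i)_d\otimes_K (J^j)_e\subseteq R_d\otimes_K R'_e$. Fixing bases of $R_d$ and $R'_e$ adapted to the decreasing flags $R_d=(I^0)_d\supseteq(I^1)_d\supseteq\cdots$ and $R'_e=(J^0)_e\supseteq(J^1)_e\supseteq\cdots$, and assigning to each basis vector its \emph{level} (the largest power in which it still lies), one checks that a basis tensor $e_k\otimes f_l$ lies in this component exactly when the two levels sum to at least $n$. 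The decisive step, which I expect to require the most care, is the extraction of a ``bad'' coordinate: since $a\notin I^t$, its expansion has a nonzero coefficient on some $e_k$ of level $\le t-1$, and since $b\notin J^{n-t+1}$, its expansion has a nonzero coefficient on some $f_l$ of level $\le n-t$; the tensor $e_k\otimes f_l$ then has level-sum $\le(t-1)+(n-t)=n-1<n$, so it is not in the component, while its coefficient in $a\otimes b$ is nonzero. Hence $a\otimes b\notin(I+J)^n$, the desired contradiction. By symmetry this yields $I^{(t)}=I^t$ and $J^{(t)}=J^t$ for all $t\le n$.
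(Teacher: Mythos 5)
The paper does not prove this statement---it is quoted verbatim from \cite[Corollary 3.5]{HNTT} and used as a black box---so there is no internal argument to compare yours against; I can only assess your proof on its own terms, and it is correct. The ``if'' direction is exactly the one-line consequence of the binomial expansion $(I+J)^{(n)}=\sum_{i+j=n}I^{(i)}J^{(j)}$ together with $(I+J)^n=\sum_{i+j=n}I^iJ^j$, and you are right that it does not use the strictness hypotheses. For the ``only if'' direction, the two ingredients you rely on both check out: the identification $I^iJ^j=I^i\otimes_K J^j$ inside $T=R\otimes_K R'$ holds because the extension of $I^i$ to $T$ is $I^i\otimes_K R'$ (flatness over a field) and $R\cdot I^i=I^i$, $R'\cdot J^j=J^j$; and the ``level'' computation is the standard fact that for two finite decreasing flags of subspaces of $R_d$ and $R'_e$, the subspace $\sum_{i+j=n}(I^i)_d\otimes(J^j)_e$ is spanned precisely by the adapted basis tensors whose levels sum to at least $n$. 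The strictness hypotheses enter exactly where you use them, to produce $b\in J^{n-t}\setminus J^{n-t+1}$ (including the case $t=n$, where $J^0\neq J^1$ means $J$ is proper), and the bihomogeneity of $a\otimes b$ lets you read off the offending coefficient. This is a clean, self-contained, elementary replacement for the citation; it buys the reader independence from \cite{HNTT} at the cost of about a page, whereas the paper's choice to cite keeps Section~\ref{one} purely expository.
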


\subsection{Binomial edge ideals}

Let $G$ be a simple graph on the vertex set $[n]$ with edge set $E(G)$ and let $S$ be the polynomial ring $K[x_1,\ldots,x_n,y_1,\ldots,y_n]$  in $2n$ variables over a 
field $K.$ The binomial edge ideal $J_G\subset S$ associated with $G$ is
\[
J_G=(f_{ij}: i<j,  \{i,j\}\in E(G)),
\] where $f_{ij}=x_iy_j-x_jy_i$ for $1\leq i<j\leq n.$ Note that $f_{ij}$ are exactly the maximal minors of the $2\times n$ generic matrix
$X=\left(
\begin{array}{cccc}
x_1 & x_2 & \cdots & x_n\\
y_1 & y_2 & \cdots & y_n
\end{array}\right).
$ We will use the notation $[i,j]$ for the $2$- minor of $X$ determined by the columns $i$ and $j.$

We consider the polynomial ring $S$ endowed with the lexicographic order induced by the natural order of the variables, and $\ini_<(J_G)$ denotes the initial ideal of $J_G$ with respect to this monomial order. By \cite[Corollary 2.2]{HHHKR}, $J_G$ is a radical ideal. Its minimal prime ideals may be characterized in terms of the combinatorics of the graph $G.$ We introduce the following notation. 
Let $\MS\subset [n]$ be a (possible empty) subset of $[n]$, and let $G_1,\ldots,G_{c(\MS)}$ be the connected components of $G_{[n]\setminus \MS}$ where 
$G_{[n]\setminus \MS}$ is the induced subgraph of $G$ on the vertex set $[n]\setminus \MS.$ For $1\leq i\leq c(\MS),$ let $\tilde{G}_i$ be the complete 
graph on the vertex set $V(G_i).$ Let \[P_{\MS}(G)=(\{x_i,y_i\}_{i\in \MS}) +J_{\tilde{G_1}}+\cdots +J_{\tilde{G}_{c(\MS)}}.\]

Then $P_{\MS}(G)$ is a prime ideal. Since the symbolic powers of an  ideal of maximal minors of a generic matrix coincide with the ordinary powers, and by using 
Corollary~\ref{corh}, we get 
\begin{equation}\label{eqprime}
P_{\MS}(G)^{(k)}=P_{\MS}(G)^k \text{ for } k\geq 1.
\end{equation} 

By \cite[Theorem 3.2]{HHHKR}, $J_G=\bigcap_{\MS\subset [n]}P_{\MS}(G).$ In particular, the minimal primes of $J_G$ are among the prime ideals $P_{\MS}(G)$ with $\MS\subset [n].$
The following proposition characterizes the sets $\MS$ for which the prime ideal $P_{\MS}(G)$ is minimal.

\begin{Proposition}\label{cpset}\cite[Corollary 3.9]{HHHKR}
$P_{\MS}(G)$ is a minimal prime of $J_G$ if and only if either $\MS=\emptyset$ or $\MS$ is non-empty and for each $i\in \MS,$ $c(\MS\setminus\{i\})<c(\MS)$.
\end{Proposition}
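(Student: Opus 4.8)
The plan is to read off the minimal primes of $J_G$ directly from the family $\{P_{\MS}(G):\MS\subseteq[n]\}$: since these ideals are all prime and $J_G=\bigcap_\MS P_{\MS}(G)$, the set $\Min(J_G)$ is exactly the collection of inclusion-minimal members of this family, so the whole question reduces to understanding when $P_{\MS'}(G)\subseteq P_{\MS}(G)$. Everything hinges on two facts about a fixed $P_{\MS}(G)$, where $G_1,\dots,G_{c(\MS)}$ denote the connected components of $G_{[n]\setminus\MS}$: \textbf{(A)} the only variables lying in $P_{\MS}(G)$ are $x_i,y_i$ with $i\in\MS$; and \textbf{(B)} for $k,l\in[n]\setminus\MS$ the minor $[k,l]$ lies in $P_{\MS}(G)$ if and only if $k$ and $l$ belong to the same component $G_a$. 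Both follow from the observation that, as the blocks of variables indexing the summands of $P_{\MS}(G)$ are pairwise disjoint, $S/P_{\MS}(G)$ is the $K$-tensor product of the domains $R_a=K[x_j,y_j:j\in V(G_a)]/J_{\tilde G_a}$; since each $J_{\tilde G_a}$ is generated in degree two, the classes $\bar x_j,\bar y_j$ are $K$-linearly independent in $R_a$. Fact (A) is then immediate, and the nontrivial half of (B) follows because, for $k\in V(G_a)$ and $l\in V(G_b)$ with $a\neq b$, the element $\bar x_k\otimes\bar y_l-\bar y_k\otimes\bar x_l$ is a nonzero element of $R_a\otimes_K R_b$. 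This last non-vanishing is the main obstacle; the rest is combinatorial bookkeeping.

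Fact (A) immediately gives that $P_{\MS'}(G)\subseteq P_{\MS}(G)$ forces $\MS'\subseteq\MS$, and whenever $\MS'\subsetneq\MS$ the inclusion is strict, since $x_i\in P_{\MS}(G)\setminus P_{\MS'}(G)$ for any $i\in\MS\setminus\MS'$. It is also convenient to record how the component count reacts to a single vertex: if $i\in\MS$ meets $d_i$ of the components $G_a$, then restoring $i$ merges exactly those $d_i$ components, so $c(\MS\setminus\{i\})=c(\MS)+1-d_i$; thus $c(\MS\setminus\{i\})<c(\MS)$ is equivalent to $d_i\geq2$, and $c(\MS\setminus\{i\})\geq c(\MS)$ to $d_i\leq1$.

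I would then prove the two implications. First, suppose $\MS\neq\emptyset$ and some $i\in\MS$ has $d_i\leq1$. Taking $\MS'=\MS\setminus\{i\}$, every component of $G_{[n]\setminus\MS'}$ either already is a component of $G_{[n]\setminus\MS}$ or is obtained by attaching $i$ to the at most one component it meets; in either case each generating minor of $P_{\MS'}(G)$ lies in $P_{\MS}(G)$, using $x_i,y_i\in P_{\MS}(G)$ to absorb the minors through $i$. Hence $P_{\MS\setminus\{i\}}(G)\subsetneq P_{\MS}(G)$ and $P_{\MS}(G)$ is not minimal. For $\MS=\emptyset$, Fact (A) allows no smaller $\MS'$, so $P_\emptyset(G)$ is minimal.

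Conversely, assume every $i\in\MS$ has $d_i\geq2$, and suppose for contradiction that $P_{\MS'}(G)\subseteq P_{\MS}(G)$ for some $\MS'\subsetneq\MS$. Choose $i\in\MS\setminus\MS'$. Since $d_i\geq2$, the vertex $i$ has neighbours $u_1,u_2\in[n]\setminus\MS$ lying in distinct components $G_{a_1}\neq G_{a_2}$ of $G_{[n]\setminus\MS}$. As $u_1,u_2,i\in[n]\setminus\MS'$ and $i$ is adjacent to both, $u_1$ and $u_2$ lie in a single component $H$ of $G_{[n]\setminus\MS'}$, whence $[u_1,u_2]\in J_{\tilde H}\subseteq P_{\MS'}(G)\subseteq P_{\MS}(G)$. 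But $u_1,u_2$ sit in different components of $G_{[n]\setminus\MS}$, so Fact (B) gives $[u_1,u_2]\notin P_{\MS}(G)$, a contradiction. Hence no such $\MS'$ exists and $P_{\MS}(G)$ is minimal. Combining the two implications with the translation $d_i\leq1\Leftrightarrow c(\MS\setminus\{i\})\geq c(\MS)$ yields the stated criterion.
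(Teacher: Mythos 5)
The paper does not prove this statement; it is quoted from \cite[Corollary 3.9]{HHHKR}, so there is no internal proof to compare against. Your self-contained argument is correct and follows the natural (and essentially the original) route: since $J_G=\bigcap_{\MS}P_{\MS}(G)$ is a finite intersection of primes, $\Min(J_G)$ consists of the inclusion-minimal members of the family, and the whole problem reduces to deciding when $P_{\MS'}(G)\subseteq P_{\MS}(G)$. You correctly identify the crux, namely that $S/P_{\MS}(G)$ splits as a tensor product of the component rings, which yields both that the only variables in $P_{\MS}(G)$ are those indexed by $\MS$ (forcing $\MS'\subseteq\MS$) and that a minor $[k,l]$ with $k,l\notin\MS$ lies in $P_{\MS}(G)$ exactly when $k,l$ are in the same component; the linear-independence argument for $\bar x_k\otimes\bar y_l-\bar y_k\otimes\bar x_l\neq 0$ is sound because each $J_{\tilde G_a}$ is generated in degree two. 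The bookkeeping identity $c(\MS\setminus\{i\})=c(\MS)+1-d_i$ (valid also when $d_i=0$, where $i$ becomes its own component) correctly translates the cut-point condition into $d_i\geq 2$, and both implications are then carried out without gaps. This is a complete proof of the cited result.
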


In combinatorial terminology, for a connected graph $G$, $P_{\MS}(G)$ is a minimal prime ideal of $J_G$ if and only if $\MS$ is empty or $\MS$ is non-empty and  is a \emph{cut-point set} of $G,$ that is, $i$ is a cut point of the restriction $G_{([n]\setminus\MS)\cup\{i\}}$ for every $i\in \MS.$ Let $\MC(G)$ be the set of all sets $\MS\subset [n]$ such that $P_{\MS}(G)\in \Min(J_G).$

Let us also mention that, by  \cite[Theorem 3.1]{CDeG} and \cite[Corollary 2.12]{CDeG}, we have 
\begin{equation}\label{intersectini}
\ini_<(J_G)=\bigcap_{\MS \in \MC(G)} \ini_< P_{\MS}(G).
\end{equation}

\begin{Remark} {\em The cited results of \cite{CDeG} require that $K$ is algebraically closed. However, in our case, we may remove this condition on the field $K.$ Indeed, neither the Gr\"obner basis of $J_G$ nor the primary decomposition of $J_G$  depend on the field $K,$ thus we may extend the field $K$ to its algebraic closure $\bar{K}.$}
\end{Remark}

\medskip

When we study symbolic powers of binomial edge ideals, we may reduce to connected graphs. Let $G=G_1\cup \cdots \cup G_c$ where $G_1,\ldots,G_c$ are the connected components of $G$ and $J_G\subset S$ the binomial edge ideal of $G.$ Then we may write
\[J_G=J_{G_1}+\cdots +J_{G_c}
\] where $J_{G_i}\subset S_i=K[{x_j,y_j: j\in V(G_i)}]$ for $1\leq i\leq c.$ In the above equality, we used the notation  $J_{G_i}$ for 
the extension of $J_{G_i}$ in $S$ as well.

\begin{Proposition}\label{connected}
In the above settings, we have $J_G^k=J_G^{(k)}$ for every $k\geq 1$ if and only if $J_{G_i}^k=J_{G_i}^{(k)}$ for every $k\geq 1.$
\end{Proposition}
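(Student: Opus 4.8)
The plan is to reduce everything to the two-component case and then invoke Corollary~\ref{corh}, since the ideals $J_{G_i}$ live in the polynomial subrings $S_i$ in pairwise disjoint sets of variables and $J_G=J_{G_1}+\cdots+J_{G_c}$ is exactly the sort of sum treated by the binomial expansion of symbolic powers. I would argue by induction on the number of components $c$, the case $c=1$ being the trivial base. For the inductive step I write $J_G=I+J$ with $I=J_{G_1}+\cdots+J_{G_{c-1}}$ and $J=J_{G_c}$, two homogeneous ideals in disjoint variables, so that the inductive hypothesis is available for $I$ (itself a binomial edge ideal, namely that of $G_1\cup\cdots\cup G_{c-1}$).

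Before applying Corollary~\ref{corh} I must check its nondegeneracy hypothesis, that $I^t\neq I^{t+1}$ and $J^t\neq J^{t+1}$ for all $t$. This is the one place where isolated vertices need care. If some $G_i$ is a single vertex then $J_{G_i}=0$, which satisfies $J_{G_i}^k=J_{G_i}^{(k)}$ trivially; moreover, adjoining the corresponding pair of variables $x_j,y_j$ to the ambient ring is a faithfully flat polynomial extension, so it alters neither the ordinary nor the symbolic powers of $J_G$. Hence such components may be discarded, and I may assume every $G_i$ carries at least one edge, so that each $J_{G_i}$, and therefore each of $I$ and $J$, is a nonzero proper homogeneous ideal. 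For such an ideal the graded Nakayama lemma gives $I^{t+1}\subsetneq I^t$ for every $t$, since $I^t=I^{t+1}$ would force $I^t=0$, impossible in the domain $S$ for a nonzero $I$; the same holds for $J$. Thus the hypothesis of Corollary~\ref{corh} is met.

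With the hypothesis verified, Corollary~\ref{corh} tells me that for each fixed $n$ one has $(I+J)^{(n)}=(I+J)^n$ if and only if $I^{(t)}=I^t$ and $J^{(t)}=J^t$ for all $t\leq n$. Quantifying over all $n$ turns this into the clean statement that $J_G^k=J_G^{(k)}$ for every $k\geq 1$ if and only if both $I^k=I^{(k)}$ and $J_{G_c}^k=J_{G_c}^{(k)}$ for every $k\geq 1$. I then feed in the inductive hypothesis applied to $I=J_{G_1}+\cdots+J_{G_{c-1}}$, which says that $I^k=I^{(k)}$ for all $k$ precisely when $J_{G_i}^k=J_{G_i}^{(k)}$ for all $k$ and all $i\leq c-1$. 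Combining the two equivalences yields the claim for all $c$ components simultaneously.

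The only genuine obstacle is the bookkeeping surrounding the nondegeneracy hypothesis of Corollary~\ref{corh}: one must guarantee that the summands are nonzero, which is exactly what forces the separate (and trivial) treatment of isolated-vertex components via the flat-extension remark. Once that point is dispatched, the argument is a direct two-step application of the binomial-expansion machinery together with the induction on the number of connected components.
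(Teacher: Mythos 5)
Your proof is correct and takes essentially the same route as the paper, whose entire proof is the one-line remark that the equivalence is a direct consequence of Corollary~\ref{corh}. Your extra care in verifying the nondegeneracy hypothesis $I^t\neq I^{t+1}$ via graded Nakayama and in discarding isolated-vertex components (where $J_{G_i}=0$) fills in details the paper leaves implicit.
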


\begin{proof}
The equivalence is a direct consequence of Corollary~\ref{corh}.
\end{proof}

\section{Symbolic powers and initial ideals}
\label{three}

In this section we discuss the transfer of the equality between symbolic and ordinary powers from the initial ideal to the ideal itself. 

Let $R=K[x_1,\ldots,x_n]$ be the polynomial ring over the field $K$ and $I\subset R$ a homogeneous ideal. We assume that there exists a monomial order $<$ on $R$ such that $\ini_<(I)$ is a square-free monomial ideal. In particular, it follows that $I$ is a radical ideal. Let 
$\Min(I)=\{\pp_1,\ldots,\pp_s\}.$ Then $I=\bigcap_{i=1}^s \pp_i.$

\begin{Lemma}\label{inilemma}
In the above settings, we assume that the following conditions are fulfilled: 
\begin{itemize}
\item [(i)] $\ini_<(I)=\bigcap_{i=1}^s \ini_<(\pp_i);$
\item [(ii)] For an integer $t\geq 1$ we have: 
\begin{itemize}
	\item [(a)] $\pp_i^{(t)}=\pp_i^t$ for $1\leq i\leq s;$
	\item [(b)] $\ini_<(\pp_i^t)=(\ini_<(\pp_i))^t$ for $1\leq i\leq s;$
	\item [(c)] $(\ini_<(I))^{(t)}=(\ini_<(I))^t.$
\end{itemize}
\end{itemize}
Then $I^{(t)}=I^t.$
\end{Lemma}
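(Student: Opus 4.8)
The plan is to prove the equality by passing to initial ideals and using that this passage preserves Hilbert functions. Since $I$ is radical with $I=\bigcap_{i=1}^s\pp_i$ and each $\pp_i$ is a minimal prime, the primary decomposition formula for symbolic powers recalled in Section~\ref{one}, combined with hypothesis (a), gives $I^{(t)}=\bigcap_{i=1}^s\pp_i^{(t)}=\bigcap_{i=1}^s\pp_i^t$. Because $I^t\subseteq I^{(t)}$ always holds and both ideals are homogeneous, it suffices to prove that they share the same Hilbert function; and since $\dim_K(R/J)_d=\dim_K(R/\ini_<(J))_d$ for every homogeneous ideal $J$ and every degree $d$, this will follow once I establish $\ini_<(I^t)=\ini_<(I^{(t)})$.

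To that end I would set up the chain of inclusions
\[
(\ini_<(I))^t\subseteq \ini_<(I^t)\subseteq \ini_<(I^{(t)})\subseteq \bigcap_{i=1}^s(\ini_<(\pp_i))^t\subseteq (\ini_<(I))^{(t)}=(\ini_<(I))^t,
\]
and note that, the leftmost and rightmost terms being equal, every inclusion collapses to an equality; in particular $\ini_<(I^t)=\ini_<(I^{(t)})$, which closes the argument. The first inclusion is the general containment $(\ini_<(J))^t\subseteq\ini_<(J^t)$, and the second is immediate from $I^t\subseteq I^{(t)}$. For the third I would use $I^{(t)}=\bigcap_i\pp_i^t$ from the first paragraph, the general fact $\ini_<(\bigcap_i J_i)\subseteq\bigcap_i\ini_<(J_i)$, and then hypothesis (b) to rewrite $\ini_<(\pp_i^t)$ as $(\ini_<(\pp_i))^t$. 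The final equality is exactly hypothesis (c).

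The one inclusion needing a genuine idea is the fourth, $\bigcap_{i=1}^s(\ini_<(\pp_i))^t\subseteq(\ini_<(I))^{(t)}$, and this is where hypothesis (i) is indispensable. Since $\ini_<(I)$ is a square-free monomial ideal, $(\ini_<(I))^{(t)}=\bigcap_{\qq\in\Min(\ini_<(I))}\qq^t$. Fixing such a minimal prime $\qq$, hypothesis (i) gives $\prod_i\ini_<(\pp_i)\subseteq\bigcap_i\ini_<(\pp_i)=\ini_<(I)\subseteq\qq$, so primeness of $\qq$ forces $\ini_<(\pp_j)\subseteq\qq$ for some $j$, whence $\bigcap_i(\ini_<(\pp_i))^t\subseteq(\ini_<(\pp_j))^t\subseteq\qq^t$; intersecting over all $\qq\in\Min(\ini_<(I))$ yields the claim. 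I expect this step---locating each minimal prime of $\ini_<(I)$ above some $\ini_<(\pp_j)$ and tracking $t$-th powers through the intersection---to be the main (if modest) obstacle, the remainder being bookkeeping with standard Gr\"obner-degeneration facts.
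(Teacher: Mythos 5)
Your proof is correct and follows essentially the same route as the paper: both arguments build a cyclic chain of inclusions linking $(\ini_<(I))^t$, $\ini_<(I^t)$, $\ini_<(I^{(t)})$, $\bigcap_i(\ini_<(\pp_i))^t$ and $(\ini_<(I))^{(t)}$ using hypotheses (i), (a), (b), (c), force every inclusion to be an equality, and conclude $I^t=I^{(t)}$ from $\ini_<(I^t)=\ini_<(I^{(t)})$ together with $I^t\subseteq I^{(t)}$. The only difference is cosmetic: you traverse the chain in the opposite direction and spell out, via the product-in-a-prime trick, the step relating $\bigcap_i(\ini_<(\pp_i))^t$ to $(\ini_<(I))^{(t)}$, which the paper handles by asserting $(\ini_<(I))^{(t)}=\bigcap_i(\ini_<(\pp_i))^{(t)}$ directly.
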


\begin{proof}
In our hypothesis, we obtain:
\[\ini_<(I^t)\supseteq (\ini_<(I))^t=(\ini_<(I))^{(t)}=\bigcap_{i=1}^s(\ini_<(\pp_i))^{(t)}\supseteq 
\bigcap_{i=1}^s(\ini_<(\pp_i))^{t}=\bigcap_{i=1}^s \ini_<(\pp_i^t)\supseteq\]
\[\supseteq \ini_<(\bigcap_{i=1}^s \pp_i^t)=\ini_<(\bigcap_{i=1}^s \pp_i^{(t)})=\ini_<(I^{(t)})\supseteq \ini_<(I^t).
\] Therefore, it follows that $\ini_<(I^{(t)})=\ini_<(I^t).$ Since $I^t\subseteq I^{(t)},$ we get $I^t= I^{(t)}.$
\end{proof}

We now investigate whether  one may use the above lemma for studying symbolic powers of  binomial edge ideals. Note that, by (\ref{intersectini}), the first condition in Lemma~\ref{inilemma} holds for any binomial edge ideal $J_G.$ In addition, as we have seen in (\ref{eqprime}), condition (a) in Lemma~\ref{inilemma} holds for any 
prime ideal $P_\MS(G)$ and any integer $t\geq 1.$

\begin{Lemma}\label{inipowers}
Let  $\MS\subset [n].$ Then $\ini_<(P_{\MS}(G)^{t})=(\ini_<(P_\MS(G)))^t,$ for every $t\geq 1.$
\end{Lemma}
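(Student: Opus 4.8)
The plan is to exploit the fact that $P_{\MS}(G)$ is a sum of ideals living in pairwise disjoint sets of variables, so that the statement reduces to two clean cases which are then recombined by a general principle about initial ideals of sums in disjoint variables. Writing $L=(\{x_i,y_i\}_{i\in\MS})$ and $J_k=J_{\tilde G_k}$ for $1\le k\le c(\MS)$, we have $P_\MS(G)=L+J_1+\cdots+J_{c(\MS)}$; since $V(G_k)\subseteq[n]\setminus\MS$ and the $V(G_k)$ are pairwise disjoint, the $c(\MS)+1$ blocks of variables $\{x_i,y_i\}_{i\in\MS}$ and $\{x_i,y_i\}_{i\in V(G_k)}$ are pairwise disjoint. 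In particular $\ini_<(P_\MS(G))=L+\sum_k\ini_<(J_k)$, with $\ini_<(J_k)=(x_iy_j:i<j,\ i,j\in V(G_k))$.

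First I would record the main tool: if $I\subset K[X]$ and $J\subset K[Y]$ are homogeneous ideals in disjoint variable sets and $<$ is the induced lexicographic order, then $\ini_<(IJ)=\ini_<(I)\,\ini_<(J)$, and more generally $\ini_<((I+J)^t)=\sum_{a+b=t}\ini_<(I^a)\,\ini_<(J^b)$. This is proved one bidegree at a time in the $(\deg_X,\deg_Y)$-bigrading: there $(I^aJ^b)_{(d,e)}=(I^a)_d\otimes_K(J^b)_e$, the order restricts to the product order (no cancellation across disjoint variables), and the subspaces $(I^a)_d$ form a flag as $a$ grows. Choosing bases of $K[X]_d$ and $K[Y]_e$ adapted to these flags produces a family of products with pairwise distinct leading monomials spanning the relevant bidegree, which yields the displayed identity. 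Iterating this over the $c(\MS)+1$ blocks reduces the lemma to proving $\ini_<(B^s)=(\ini_<B)^s$ for each single block $B$ and every $s\ge1$.

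For the block $L$ there is nothing to do, since $L$ is a monomial ideal and $\ini_<(L^s)=L^s=(\ini_<L)^s$. The essential case is a single determinantal block $J_k$, which is the ideal of maximal minors (the $2$-minors) of the generic $2\times|V(G_k)|$ matrix. Here I would invoke the straightening law for the minors $[i,j]$: by the Pl\"ucker relations every product of minors rewrites as a $K$-linear combination of standard products of the same length, the standard products of length $s$ form a $K$-basis of $J_k^s$, and their initial monomials $\prod_\ell x_{i_\ell}y_{j_\ell}$ are pairwise distinct and lie in $(\ini_<J_k)^s$. Hence $\ini_<(J_k^s)$ is generated by these monomials, giving $\ini_<(J_k^s)=(\ini_<J_k)^s$; this is the classical fact that the products of maximal minors form a Gr\"obner basis of the powers of the maximal minor ideal.

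The hard part will be precisely this determinantal case, where the real content lies: it rests on the algebra-with-straightening-law structure of the minors and on the observation that the straightening relations have their leading terms among the non-standard products. By contrast, the recombination step is a formal consequence of the disjoint-variable bigrading together with the existence of flag-adapted bases, and the monomial block is immediate.
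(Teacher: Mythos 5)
Your proposal is correct, and it follows the same overall strategy as the paper---decompose $P_{\MS}(G)$ into the monomial block $L=(\{x_i,y_i\}_{i\in\MS})$ and the determinantal blocks $J_{\tilde G_k}$ living in pairwise disjoint sets of variables, settle each block separately, and recombine---but the two arguments diverge in how the recombination is carried out. The paper passes to Rees algebras: it writes $\MR(P)$ and $\MR(\ini_<P)$ as tensor products of the Rees algebras of the blocks, invokes \cite[Theorem 2.1]{Con} for $\ini_<(J_i^t)=(\ini_<J_i)^t$ and \cite[Theorem 2.7]{CHV} to identify $\MR(\ini_<J_i)$ with $\ini_<\MR(J_i)$, and concludes by comparing Hilbert functions of $\MR(\ini_<P)\subseteq\ini_<\MR(P)$. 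You instead prove the identity $\ini_<\bigl((I+J)^t\bigr)=\sum_{a+b=t}\ini_<(I^a)\,\ini_<(J^b)$ for homogeneous ideals in disjoint variables directly, bidegree by bidegree, using the tensor decomposition $(I^aJ^b)_{(d,e)}=(I^a)_d\otimes_K(J^b)_e$ and bases adapted to the flags $(I^t)_d\subseteq(I^{t-1})_d\subseteq\cdots$; this is more elementary and, notably, handles correctly the fact that $\sum_{a+b=t}I^aJ^b$ is not a direct sum, a point the paper's tensor-product formula for the single-graded Rees algebra glosses over. Both proofs ultimately rest on the same nontrivial external input, namely that the products of maximal minors of the generic $2\times m$ matrix form a Gr\"obner basis of the powers of the maximal-minor ideal. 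Two small cautions on your write-up of that step: the standard products of length $s$ \emph{generate} $J_k^s$ (they are not a $K$-basis of the ideal), and the inference ``the generators have pairwise distinct leading monomials, hence those monomials generate $\ini_<(J_k^s)$'' is not valid as stated---distinct leading terms of a generating set do not make it a Gr\"obner basis. That conclusion genuinely requires the standard monomial theory (a dimension count against the standard bitableau basis of $S$), i.e., exactly the content of \cite[Theorem 2.1]{Con}, which you correctly identify as the classical fact; citing it, as the paper does, closes the argument.
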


\begin{proof}
To shorten the notation, we write $P$ instead of $P_{\MS}(G)$, $c$ instead of $c(\MS),$ and $J_i$ instead of $J_{\tilde{G_i}}$ for $1\leq i\leq c.$
Let $\MR(P),$ respectively $\MR(\ini_<(P))$ be the Rees algebras of $P,$ respectively $\ini_<(P).$ Then, as the sets of variables 
$\{x_j,y_j:j\in V(\tilde{G_i})\}$ are pairwise disjoint, we get
\begin{equation}\label{eqRees1}
\MR(P)=\MR((\{x_i,y_i\}_{i\in \MS}))\otimes_K (\otimes_{i=1}^c\MR(J_i)).
\end{equation}
On the other hand, since $\ini_<(P)=(\{x_i,y_i\}_{i\in \MS})+\ini_<(J_1)+\cdots+\ini_<(J_c),$ due to the fact that  
$J_1,\ldots,J_c$ are ideals in disjoint sets of variables different from $\{x_i,y_i\}_{i\in \MS}$ (see \cite{HHHKR}), we obtain
\begin{eqnarray}\label{eqRees2}
 \MR(\ini_<P)=\MR((\{x_i,y_i\}_{i\in \MS}))\otimes_K (\otimes_{i=1}^c\MR(\ini_<J_i))=\\ \nonumber
=\MR((\{x_i,y_i\}_{i\in \MS}))\otimes_K (\otimes_{i=1}^c\ini_<\MR(J_i)).
\end{eqnarray}
For the last equality we used the equality $\ini_<(J_i^t)=(\ini_<J_i)^t$ for all $t\geq 1$ which is a particular case of \cite[Theorem 2.1]{Con} and 
the equality $\MR(\ini_<J_i)=\ini_<\MR(J_i)$ due to \cite[Theorem 2.7]{CHV}.
We know that $\MR(P)$ and $\ini_<(\MR(P))$ have the same Hilbert function. On the other hand, equalities~(\ref{eqRees1}) and
 (\ref{eqRees2}) show that $\MR(P)$  and $\MR(\ini_<P)$ have the same Hilbert function since $\MR(J_i)$ and $\ini_<\MR(J_i)$ have the same Hilbert function for every $1\leq i\leq s.$ Therefore, $\MR(\ini_<P)$ and $\ini_<\MR(P)$ have the same Hilbert function.  As 
$\MR(\ini_<P)\subseteq \ini_<(\MR(P))$, we have $\MR(\ini_<P)= \ini_<(\MR(P))$, which implies by \cite[Theorem 2.7]{CHV} that 
$\ini_<(P^t)=(\ini_<P)^t$ for all $t.$
\end{proof}

\begin{Theorem}\label{iniconseq}
Let $G$ be a connected graph on the vertex set $[n].$  If $\ini_<(J_G)$ is a normally torsion-free ideal, then $J_G^{(k)}=J_G^k$ for $k\geq 1.$
\end{Theorem}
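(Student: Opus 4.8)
The plan is to apply Lemma~\ref{inilemma} directly, taking $I=J_G$ and letting $\pp_1,\dots,\pp_s$ be the minimal primes of $J_G$. By Proposition~\ref{cpset} and the discussion following it, these minimal primes are exactly the ideals $P_{\MS}(G)$ with $\MS\in\MC(G)$, so it suffices to verify that the four hypotheses (i), (ii)(a), (ii)(b) and (ii)(c) of the lemma hold for this choice. Before doing so I would record that $\ini_<(J_G)$ is a square-free monomial ideal, since the initial terms of the elements of the reduced Gr\"obner basis of $J_G$ are products of distinct variables; this places us within the standing assumptions of Section~\ref{three} and makes the normally torsion-free hypothesis meaningful.

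The verification of the first three conditions is then a matter of quoting results already established. Condition (i), namely $\ini_<(J_G)=\bigcap_{i=1}^s\ini_<(\pp_i)$, is precisely the identity (\ref{intersectini}). Condition (ii)(a), that $\pp_i^{(t)}=\pp_i^t$ for each $i$, is the equality (\ref{eqprime}), which holds for every prime $P_{\MS}(G)$ and every $t\geq 1$. Condition (ii)(b), that $\ini_<(\pp_i^{t})=(\ini_<(\pp_i))^t$, is exactly the content of Lemma~\ref{inipowers}.

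It remains to check condition (ii)(c), that $(\ini_<(J_G))^{(t)}=(\ini_<(J_G))^t$, and this is where the hypothesis is used. Since $\ini_<(J_G)$ is a square-free monomial ideal, the characterization recalled in Section~\ref{one} gives that $\ini_<(J_G)$ is normally torsion-free if and only if $(\ini_<(J_G))^k=(\ini_<(J_G))^{(k)}$ for all $k\geq 1$; in particular (ii)(c) holds for every $t$. With all four hypotheses in place, Lemma~\ref{inilemma} yields $J_G^{(t)}=J_G^t$ for each fixed $t\geq 1$, and since $t$ is arbitrary this is the desired conclusion $J_G^{(k)}=J_G^k$ for all $k\geq 1$.

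Because the argument is essentially an assembly of previously established facts, I do not anticipate a genuine obstacle. The only points requiring care are the bookkeeping that identifies the minimal primes of $J_G$ with the ideals $P_{\MS}(G)$, $\MS\in\MC(G)$, and the observation that $\ini_<(J_G)$ is square-free, which is exactly what lets the normally torsion-free assumption be converted into the power equality (ii)(c) that feeds Lemma~\ref{inilemma}.
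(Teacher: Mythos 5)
Your proposal is correct and follows exactly the route the paper intends: apply Lemma~\ref{inilemma} with condition (i) supplied by (\ref{intersectini}), condition (ii)(a) by (\ref{eqprime}), condition (ii)(b) by Lemma~\ref{inipowers}, and condition (ii)(c) by the normally torsion-free hypothesis together with the characterization of normally torsion-free square-free monomial ideals recalled in Section~\ref{one}. The paper's own proof is just a one-line citation of these same ingredients, so your write-up is simply a more explicit version of the same argument.
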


\begin{proof}
The proof is a consequence of Lemma~\ref{inipowers} combined with relations (\ref{intersectini}) and  (\ref{eqprime}).
\end{proof}

There are binomial edge ideals whose initial ideal with respect to the lexicographic order are normally torsion-free. 
For example, the binomial edge ideals which have a quadratic Gr\"obner basis  have   normally torsion-free initial ideals. They were characterized in 
\cite[Theorem 1.1]{HHHKR} and correspond to the so-called closed graphs. The graph  $G$ is  \textit{closed} if  there exists a labeling of its vertices such that for any edge $\{i,k\}$ with $i<k$ and for every $i<j<k$, we have $\{i,j\}, \{j,k\}\in E(G).$ If $G$ is closed with respect to its labeling, then, with respect to the lexicographic order $<$ on $S$ induced by the natural ordering of the indeterminates, the initial ideal of $J_G$ is $\ini_<(J_G)=(x_iy_j: i<j \text{ and }\{i,j\}\in E(G)).$ This implies that  $\ini_<(J_G)$ is the edge ideal of a bipartite graph, hence it is normally torsion-free. Therefore, we get the following. 

\begin{Corollary}\label{closed}
Let $G$ be a closed graph on the vertex set $[n].$ Then $J_G^{(k)}=J_G^k$ for $k\geq 1.$
\end{Corollary}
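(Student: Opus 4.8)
The plan is to recognize that Corollary~\ref{closed} is essentially a direct specialization of Theorem~\ref{iniconseq}, so the real work is to verify that the hypothesis of that theorem holds for a closed graph $G$. Thus the first step is to invoke the characterization in \cite[Theorem 1.1]{HHHKR}: since $G$ is closed with respect to its given labeling, the lexicographic order $<$ on $S$ induced by the natural ordering of the variables yields the quadratic initial ideal
\[
\ini_<(J_G)=(x_iy_j: i<j \text{ and }\{i,j\}\in E(G)).
\]
I would remark that the labeling must be the one witnessing closedness, and that the monomial order is precisely the lexicographic order already fixed in the preliminaries on binomial edge ideals, so there is no inconsistency between the order used here and the one used in relation~(\ref{intersectini}).

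Next I would identify $\ini_<(J_G)$ as the edge ideal of a bipartite graph. The point is that each generator $x_iy_j$ is square-free and of degree two, mixing one $x$-variable with one $y$-variable; regarding the $x_i$ and $y_j$ as the two color classes of vertices, $\ini_<(J_G)$ is exactly the monomial edge ideal $I(H)$ of a bipartite graph $H$ on the vertex set $\{x_1,\dots,x_n\}\sqcup\{y_1,\dots,y_n\}$. This bipartiteness is automatic because every edge of $H$ joins an $x$-vertex to a $y$-vertex, so $H$ has no odd cycles.

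Having exhibited $\ini_<(J_G)$ as a bipartite edge ideal, I would apply the classical fact recorded in the preliminaries (\cite[Corollary 10.3.17]{HH10}): the edge ideal of a bipartite graph is normally torsion-free. Hence $\ini_<(J_G)$ is normally torsion-free. At this stage the hypothesis of Theorem~\ref{iniconseq} is satisfied for our connected closed graph $G$, and applying that theorem gives the desired conclusion $J_G^{(k)}=J_G^k$ for all $k\geq 1$.

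I do not anticipate a genuine obstacle, since every ingredient is either already assembled in the excerpt or is a cited black box. The one point deserving a sentence of care is the connectedness assumption in Theorem~\ref{iniconseq}: if $G$ is merely closed but not connected, one should first reduce to connected components via Proposition~\ref{connected}, noting that an induced subgraph (in particular a connected component) of a closed graph is again closed with respect to the restricted labeling. The cleanest write-up, however, simply states the shape of the quadratic initial ideal, invokes bipartiteness and normal torsion-freeness, and closes with Theorem~\ref{iniconseq}, exactly as the surrounding text already telegraphs in the paragraph preceding the corollary.
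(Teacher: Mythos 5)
Your proposal is correct and follows essentially the same route as the paper: identify $\ini_<(J_G)$ as the edge ideal of a bipartite graph via the quadratic Gr\"obner basis of \cite[Theorem 1.1]{HHHKR}, invoke \cite[Corollary 10.3.17]{HH10} for normal torsion-freeness, and conclude by Theorem~\ref{iniconseq}. Your extra remark on reducing to connected components via Proposition~\ref{connected} is a sensible point of care that the paper leaves implicit.
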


Let $C_4$ be the $4$-cycle with edges 
$\{1,2\},\{2,3\},\{3,4\},\{1,4\}.$ Let $<$ be the lexicographic order on $K[x_1,\ldots,x_4,y_1,\ldots,y_4]$ induced by 
$x_1>x_2>x_3>x_4>y_1>y_2>y_3>y_4.$ With respect to this monomial order, we have
\[
\ini_<(J_{C_4})=(x_1x_4y_3,x_1y_2,x_1y_4,x_2y_1y_4,x_2y_3,x_3y_4).
\]

Let $\Delta$ be the simplicial complex whose facet ideal $I(\Delta)=\ini_<(J_{C_4}).$ It is easily seen that $\Delta$ has no special odd cycle, therefore, by \cite[Theorem 10.3.16]{HH10}, it follows that $I(\Delta)$ is normally torsion-free. Note that the $4$-cycle is a complete bipartite graph, thus the equality $J_{C_4}^k=J_{C_4}^{(k)}$ for all $k\geq 1$ follows also from \cite{Oh2}.

In view of this result, one would expect that initial ideals of binomial edge ideals  of cycles are normally torsion-free. But this is not the case. Indeed, let $C_5$ be the $5$-cycle with edges $\{1,2\},\{2,3\},\{3,4\},\{4,5\},\{1,5\}$ and $I=\ini_<(J_{C_5})$  the initial ideal of $J_{C_5}$ with respect to the lexicographic order on $K[x_1\ldots,x_5,y_1,\ldots,y_5].$ By using \textsc{Singular} \cite{Soft}, we checked that $I^2\subsetneq I^{(2)}.$ Indeed, the monomial $x_1^2x_4x_5y_3y_5\in I^2$ is a minimal generator of $I^2.$ On the other hand, 
the monomial $x_1x_4x_5y_3y_5\in I^{(2)}$, thus $I^2\neq I^{(2)}$, and $I$ is not normally torsion-free. On the other hand, again with \textsc{Singular}, we have checked that $J_{C_5}^2=J_{C_5}^{(2)}.$

\end{document}